\newcommand{\ra}{\rightarrow}
\newcommand{\ZZ}{\mathbb Z}
\newcommand{\PP}{\mathbb P}
\newcommand{\cO}{\mathcal{O}}
\newcommand{\cM}{\mathcal{M}}
\newcommand{\Pic}{\mbox{Pic}}
\newcommand{\Cl}{\operatorname{Cliff}}
\theoremstyle{plain}
\newtheorem{theorem}{Theorem}[section]
\newtheorem{lem}[theorem]{Lemma}
\newtheorem{prop}[theorem]{Proposition}
\newtheorem{cor}[theorem]{Corollary}
\newtheorem{rem}[theorem]{Remark}
\newtheorem{ques}[theorem]{Question}
\numberwithin{equation}{section}
\begin{document}
\title[Small Clifford index]{Bundles of rank 2 with small Clifford index on algebraic curves}

\author{H. Lange}
\author{P. E. Newstead}

\address{H. Lange\\Department Mathematik\\
              Universit\"at Erlangen-N\"urnberg\\
              Bismarckstra\ss e $1\frac{ 1}{2}$\\
              D-$91054$ Erlangen\\
              Germany}
              \email{lange@mi.uni-erlangen.de}
\address{P.E. Newstead\\Department of Mathematical Sciences\\
              University of Liverpool\\
              Peach Street, Liverpool L69 7ZL, UK}
\email{newstead@liv.ac.uk}

\thanks{Both authors are members of the research group VBAC (Vector Bundles on Algebraic Curves). They would like to thank the Isaac Newton Institute, where a part of this paper was written during the Moduli Spaces programme.}
\keywords{Algebraic curve, stable vector bundle, Clifford index, K3-surface.}
\subjclass[2000]{Primary: 14H60; Secondary: 14J28}

\begin{abstract}
In this paper, we construct stable bundles $E$ of rank 2 on suitably chosen curves of any genus $g\ge12$ with maximal Clifford index such that the Clifford index of $E$ takes the minimum possible value for curves with this property. 
\end{abstract}
\maketitle

\section{Introduction}                                                   

In a previous paper \cite{cl} (see also \cite{fo, fo2, ln}), we constructed examples of curves for which the rank-$2$ Clifford index $\Cl_2(C)$ is strictly less than the classical Clifford index, thus producing counter-examples to a conjecture of Mercat \cite{m}. The purpose of the present paper is to improve \cite[Theorem 1.1]{cl} by substantially weakening the hypotheses; the new result is best possible and enables us to construct examples of curves $C$ of any genus $g\ge12$  for which the Clifford index $\Cl(C)$ takes its maximum possible value $\left[\frac{g-1}2\right]$, while the rank-$2$ Clifford index $\Cl_2(C)$ satisfies $\Cl_2(C)=\frac12\Cl(C)+2$, which is the minimum possible value for curves of Clifford index $\Cl(C)$.

\medskip
To state the results, we recall first the definition of $\Cl_n(C)$. For any vector bundle $E$ of rank $n$ and degree $d$ on $C$, we define
$$
\gamma(E) := \frac{1}{n} \left(d - 2(h^0(E) -n)\right) = \mu(E) -2\frac{h^0(E)}{n} + 2.
$$
If $C$ has genus $g \geq 4$, we then define, for any positive integer $n$,
$$
\Cl_n(C):= \min_{E} \left\{ \gamma(E) \;\left| 
\begin{array}{c} E \;\mbox{semistable of rank}\; n \\
h^0(E) \geq 2n,\; \mu(E) \leq g-1
\end{array} \right. \right\}
$$
(this invariant is denoted in \cite{cliff, cl, ln} by $\gamma_n'$). Note that $\Cl_1(C) = \Cl(C)$ is the usual Clifford index of the curve $C$. Moreover, as observed in \cite[Proposition 3.3 and Conjecture 9.3]{cliff}, the conjecture of \cite{m} can be restated in a slightly weaker form as

\medskip
\noindent{\bf Conjecture.} $\Cl_n(C)=\Cl(C)$. 

\medskip
In fact, for $n=2$, this form of the conjecture is equivalent to the original (see \cite[Proposition 2.7]{ln}).

\medskip
Our main theorem can now be stated.

\medskip
\noindent{\bf  Theorem \ref{thm3.1}.}\begin{em} Suppose that   $g$, $s$, $d$ are integers such that
\begin{equation*}
s \geq -1,\ g \geq 2s+14\mbox{ and } d=g-s.
\end{equation*}
Then there exists a curve $C$ of genus $g$ having $\Cl(C) = \left[ \frac{g-1}{2} \right]$ 
and a stable vector bundle $E$ of rank $2$ and degree $d$ on $C$ with $\gamma(E) = \frac{g-s}{2} - 2$.  Hence
$$
\Cl_2(C) \le \frac{g-s}{2} -2 <\Cl(C).
$$\end{em}

\medskip
This theorem is a substantially strengthened version of \cite[Theorem 1.1]{cl}; the hypotheses are now best possible in the sense that the theorem fails for $g\le2s+13$. The stronger hypotheses in the original theorem were needed to ensure that certain K3-surfaces contained no effective divisors $D$ such that $D^2=0$ or $D^2=-2$. In the present paper, our K3-surfaces may contain such divisors, but we are able to control these and show that they do not affect the calculations required to prove the theorem. The proof of the theorem itself is essentially the same as that of \cite[Theorem 1.1]{cl}; we give it in full for the sake of clarity and to demonstrate how the hypotheses are used.

\medskip
As a corollary to Theorem \ref{thm3.1} we have

\medskip
\noindent{\bf Theorem \ref{thm2}.}\begin{em} Let $\gamma$ be an integer, $\gamma\ge5$. Then there exists a curve $C$ with $\Cl(C)=\gamma$ such that
$$\Cl_2(C)=\frac{\gamma}2+2.$$
Moreover $C$ can be taken to have genus either $2\gamma+1$ or $2\gamma+2$.\end{em}

\medskip
Following an extended discussion of curves on certain K3 surfaces in section \ref{k3}, the proofs of the theorems are given in section \ref{main}. We finish with some open questions in section \ref{oq}.

\section{Some curves on a K3-surface}\label{k3}

Let $g,d,s$ be integers with 
\begin{equation}  \label{eq2.1}
 d=g-s>0,\ g\ge0,\ g\ge 2s+13\mbox{ and }(d,g)\ne(7,4).
\end{equation}
Note that
$$d^2-12g=g(g-2s-12)+s^2>0.$$
It follows from \cite[Theorem 6.1,2]{k} that there exists a smooth K3-surface $S$ of type $(2,3)$ in $\PP^4$ containing a smooth curve $C$ of genus $g$ and degree $d$ with
$$
\Pic(S) = H \ZZ \oplus C \ZZ,
$$
where $H$ denotes the hyperplane bundle. In particular, we have
$$
H^2 =6, \ C \cdot H = d  \mbox{ and }  C^2 = 2g-2.
$$ 

\begin{prop} \label{prop2.1}
Suppose \eqref{eq2.1} holds, $g\ge2$ and $g+s>2$. Then the curve $C$ is an ample divisor on $S$.
\end{prop}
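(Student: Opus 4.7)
\medskip
\noindent\textbf{Proof proposal.} The plan is to invoke the Nakai--Moishezon criterion on the surface $S$: since $\dim S=2$, the class $C$ is ample if and only if $C^2>0$ and $C\cdot D>0$ for every irreducible curve $D\subset S$. The first condition is immediate from $C^2=2g-2>0$, which uses $g\ge 2$. It remains to verify strict positivity of $C\cdot D$ for every irreducible $D$.

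If $D=C$ then $C\cdot D=2g-2>0$, so assume $D\ne C$. Being distinct irreducible curves, $C$ and $D$ share no common component, whence $C\cdot D\ge 0$, and I claim that equality cannot occur. Indeed, the Hodge index theorem applied to the class $C$ (of positive self-intersection) implies that the intersection form on $\operatorname{NS}(S)_{\RR}$ is negative definite on the orthogonal complement of $C$; thus every non-zero class in $C^\perp$ has negative self-intersection. Because $S$ is a K3 surface and $D$ is irreducible, $D^2\ge -2$ with $D^2$ even, so $C\cdot D=0$ would force $D^2=-2$, i.e.\ $D$ would be a smooth rational $(-2)$-curve.

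To rule this out, write $D=aH+bC$ with $a,b\in\ZZ$, using $\Pic(S)=H\ZZ\oplus C\ZZ$. The equation $C\cdot D=ad+2(g-1)b=0$ rules out $b=0$ (since then $d>0$ forces $a=0$, so $D=0$) and otherwise gives $a=-2(g-1)b/d$; substituting into $D^2=6a^2+2abd+2(g-1)b^2=-2$ yields the Diophantine identity
\[
(g-1)b^2\bigl(d^2-12(g-1)\bigr)=d^2.
\]
Since $|b|\ge 1$ and $d^2>12(g-1)$ by \eqref{eq2.1}, this forces the numerical bound $(g-2)d^2\le 12(g-1)^2$, equivalently $d^2\le 12g+12/(g-2)$.

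The main obstacle is to contradict this bound using the hypotheses. The identity $d^2-12g=g(g-2s-12)+s^2$, combined with $g\ge 2s+13$, gives $d^2\ge 13g+s^2$. The two hypotheses $g\ge 2s+13$ and $g+s>2$ require some integer $s\in[3-g,(g-13)/2]$, and the condition $3-g\le(g-13)/2$ forces $3g\ge 19$, i.e.\ $g\ge 7$. Therefore $(g+s^2)(g-2)\ge g(g-2)\ge 35>12$, which rearranges to $13g+s^2>12g+12/(g-2)$ and so gives $d^2>12g+12/(g-2)$, contradicting the bound above. Hence no such $D$ exists, and the Nakai--Moishezon criterion shows that $C$ is ample.
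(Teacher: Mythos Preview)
Your proof is correct, and it follows a genuinely different route from the paper's. The paper proves $C\cdot D>0$ by a direct case split on the signs of $m,n$ in $D\sim mH+nC$: using the very ample class $H$ (so $D\cdot H>0$) together with $D^2\ge -2$, each sign combination is handled by an explicit inequality, with the borderline case $m=-2,\ n=1$ requiring the hypothesis $g\ne 2s+12$. Your argument instead uses the Hodge index theorem to reduce everything to excluding a $(-2)$-curve orthogonal to $C$, and then dispatches that single Diophantine equation $(g-1)b^2\bigl(d^2-12(g-1)\bigr)=d^2$ with one clean numerical estimate. Your approach is more conceptual and avoids the four-way case analysis; the paper's argument is more elementary (no Hodge index) and makes visible exactly where each numerical hypothesis enters---in particular, the role of the condition $g\ge 2s+13$ at the $(m,n)=(-2,1)$ step. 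Both arrive at the same destination, and your derivation that the hypotheses force $g\ge 7$ is a nice observation that the paper does not make explicit.
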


\begin{proof}
We show that $C \cdot D > 0$ for any effective divisor $D$ on $S$ which we may assume to be irreducible.
So let $D \sim mH + nC$ be an irreducible curve on $S$.
We have
$$
C \cdot D = m(g-s) + n(2g-2).
$$ 
Note first that, since $H$ is a hyperplane,
\begin{equation} \label{eq102}
D \cdot H = 6m + (g-s)n  > 0.
\end{equation}
If $m,n \geq 0$, then one of them has to be positive and then clearly $C \cdot D > 0$. The case $m,n \leq 0$ 
contradicts \eqref{eq102}.

\medskip
Suppose $m >0$ and $n < 0$.  Then, using \eqref{eq102} and \eqref{eq2.1}, we have
$$
C \cdot D = m(g-s) + n(2g-2) > - \frac{n}6\big(g(g-2s-12)+s^2+12\big)>0. 
$$ 

Finally, suppose $m< 0$ and $n > 0$. Then, since we assumed $D$ irreducible, we have $D^2\ge-2$ and
\begin{equation}\label{eq103}
nC \cdot D = -m D \cdot H + D^2 \geq -m D \cdot H -2 \geq -m -2.
\end{equation}
If $m \leq -3$ , then $n C \cdot D > 0$.  
If $m = -1$, we have
$$
C \cdot D = -(g-s) + n(2g-2) \geq g +s -2 > 0. 
$$
The same argument works for $m = -2,\; n \geq 2$. Finally, if $m = -2$ and $n = 1$, we have
$$D^2=(C-2H)^2=2g-2-4d+24=4s-2g+22.$$
So $D^2=-2$ if and only if $g=2s+12$, contradicting \eqref{eq2.1}. Thus $D^2\ge0$ and \eqref{eq103} implies that $C\cdot D>0$.
\end{proof}

We now investigate the possible existence of $(-2)$-curves on $S$. Note that, if $D$ is an irreducible effective divisor on $S$, we have 
$$
\chi(D) = \frac{D^2}{2} + 2 \geq 1,
$$
with equality if and only if $D$ is a $(-2)$-curve. It follows that a fixed component of any effective divisor must be a $(-2)$-curve. Note that any irreducible $(-2)$-curve $F$ has
$$h^0(S,F)=1,\  h^1(S,F)=h^2(S,F)=0$$
(see \cite{sd}).

\begin{prop}   \label{lem2.1}
Suppose that \eqref{eq2.1} holds and let $F$ be an irreducible $(-2)$-curve on $S$. Then 
one of the following holds:
\begin{itemize}
\item $F \cdot H \geq d-5$;
\item $s=-3$, $F\cdot H=d-6$, $F\sim C-H$;
\item $s=-3$, $g\equiv 0\bmod3$, $F\cdot H=d-6$, $F\sim\frac{g}3H-C$;
\item $s\ge-1$, $g=4s+16$, $F\cdot H=d-8$, $F\sim(s+4)H-C$;
\item $s\ge1$ and odd, $g=\frac52(s+5)$, $F\cdot H=d-10$, $F\sim\frac{s+5}2H-C$.
\end{itemize}
\end{prop}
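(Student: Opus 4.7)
The plan is to classify $F$ by writing it in the Picard basis and translating the self-intersection $F^2=-2$ into a Diophantine condition on the coefficients. Write $F\sim mH+nC$ with $m,n\in\ZZ$, and set $a:=F\cdot H$ and $b:=F\cdot C$. Since $H$ is very ample and $F$ is irreducible, $a>0$; since $F$ and $C$ are distinct irreducible curves (their squares differ), $b\ge0$. From the defining relations $a=6m+dn$ and $b=dm+(2g-2)n$ I can solve for $m$ and $b$ in terms of $a$ and $n$, and substituting into $-2=F^2=ma+nb$ yields the key identity
\[
a^2 \;=\; n^2\Delta - 12, \qquad \Delta := d^2-12g+12,
\]
where $\Delta>0$ by \eqref{eq2.1}. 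Note that $n\ne0$, since otherwise $F\sim mH$ forces $F^2=6m^2\ge0$.

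\medskip

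The first (easy) step will be to handle $|n|\ge2$. There $a^2\ge 4\Delta-12$, and a short manipulation using $d=g-s$ and $g\ge 2s+13$ shows $a^2\ge(d-5)^2$; combined with $a>0$ this gives $a\ge d-5$, i.e.\ the first alternative of the statement.

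\medskip

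The main body of the proof treats the case $|n|=1$, where $a^2=d^2-12g$. Set $t:=d-a\ge0$, so the relation becomes $t(2d-t)=12g$, giving $g=t(2d-t)/12$ and $s=g-d=(2d(t-6)-t^2)/12$. The hypothesis $g\ge 2s+13$ rearranges to $2d(12-t)\ge 156-t^2$. For $1\le t\le 11$ this is an explicit lower bound on $d$; the value $t=12$ is ruled out directly, and for every $t\ge 13$ the same inequality forces $d\le t$, contradicting $a>0$. Parity of $2d-t$ rules out $t\in\{7,9,11\}$. What remains is $t\in\{1,\dots,5\}$ (which falls under the first alternative again) and $t\in\{6,8,10\}$. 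For each of these three values I will examine both $n=+1$ and $n=-1$, reading off $m$ from $a=6m+dn$ and imposing $m\in\ZZ$: $t=6$ forces $s=-3$ and yields the two classes $C-H$ (case 2) and $(g/3)H-C$ (case 3, the latter requiring $3\mid g$); $t=8$ forces $g=4s+16$ with $s\ge-1$ and produces $(s+4)H-C$ (case 4); $t=10$ forces $s$ odd with $s\ge1$ and $g=5(s+5)/2$, producing $((s+5)/2)H-C$ (case 5).

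\medskip

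The hardest part will be the bookkeeping inside the $|n|=1$ analysis: checking that $g\ge 2s+13$ together with integrality of $m$ really does leave only the listed five configurations, and that each occurs in the form recorded in the statement, with the correct divisibility or parity restrictions on $s$ and $g$. The $|n|\ge2$ step is routine once the identity $a^2=n^2\Delta-12$ is in hand.
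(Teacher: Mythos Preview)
Your argument is correct and is essentially the paper's own proof: both derive the key identity $r^2+12=n^2\bigl(d^2-12(g-1)\bigr)$ (your $a^2=n^2\Delta-12$), dispose of $|n|\ge2$ by a direct estimate, and for $|n|=1$ reduce to the three residual values $r=d-6,\,d-8,\,d-10$ which are then solved explicitly. The only cosmetic difference is that you organize the $|n|=1$ case via the parameter $t=d-a$ and the inequality $2d(12-t)\ge156-t^2$, whereas the paper bounds $r$ below by $d-12$ directly and then checks the three even values one by one.
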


\begin{proof}
Write $F \sim mH + nC$ and 
$$
r := F \cdot H = 6m +dn.
$$
The condition $F^2 = -2$ translates to
\begin{equation*} 
3m^2 + dmn + (g-1)n^2 = -1. 
\end{equation*}
Inserting $m = \frac{r-dn}{6}$, this gives
\begin{equation} \label{eq2.2}
 n^2[d^2 - 12 (g-1)] = r^2 + 12.
\end{equation}
Suppose first that $n^2 \geq 4$ and $r\le d-6$. In order to get a contradiction, it is enough to have
$$
4[d^2 - 12(g-1)] > (d-6)^2 + 12,
$$
which gives
$$
d^2 + 4d > 16g.
$$
Inserting $d = g-s$, this is equivalent to                                                               
$$
g(g-2s-12) +s^2 -4s > 0.
$$
This holds by \eqref{eq2.1}. 

\medskip
 It remains to consider the case $n^2 = 1$. If $r \leq d-12$, then in order to get a contradiction, it is enough to have
\begin{equation*}  
d^2 - 12(g-1) > (d-12)^2 + 12, 
\end{equation*}
which means $8d > 4g + 48$. Inserting $d = g-s$, this is equivalent to
$
g > 2s + 12
$,
which is valid by \eqref{eq2.1}. 

\medskip
The equation \eqref{eq2.2} with $n^2=1$ implies that $r-d$ is even. So we need to consider the cases $r=d-6$, $r=d-8$ and $r=d-10$.

\medskip
If $r = d-6$, \eqref{eq2.2} reduces to $d = g + 3$, so $s=-3$ and $m=\frac{d-6\pm d}6$, giving the second and third cases of the statement.

\medskip
Suppose $r = d-8$. Then \eqref{eq2.2} says 
$$
d^2 -12(g-1) = (d-8)^2 + 12,
$$
which reduces to $4d = 3g + 16$ or equivalently to
$g = 4s + 16$.
If $n = 1$, the formula $m = \frac{r-d}{6}$ gives $m = - \frac{4}{3}$, a contradiction. We are left with the case $n = -1$ and
$$
m = \frac{r+d}{6} = \frac{2d-8}{6} = s + 4.
$$
The condition $s\ge-1$ follows from \eqref{eq2.1}.

\medskip
Finally, if $r=d-10$, \eqref{eq2.2} reduces to $5d=3g+25$ or equivalently to $g=\frac52(s+5)$. So $m=\frac{d-10\pm d}6$. Again $n=1$ gives a contradiction, so $n=-1$ and 
$$m=\frac{2d-10}6=\frac{s+5}2.$$
The condition $s\ge1$ follows from \eqref{eq2.1}.
\end{proof}

\begin{cor} \label{cor2.2}
Suppose \eqref{eq2.1} holds with $s\ge-1$. Then the linear system $|C-H|$ is without fixed components. 
\end{cor}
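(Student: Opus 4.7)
The plan is to combine Proposition \ref{lem2.1} with the observation (noted just before that proposition) that every fixed component of an effective divisor on $S$ must be an irreducible $(-2)$-curve. So if $F$ is a fixed component of $|C-H|$, then $D:=C-H-F$ is effective and $F$ must be one of the $(-2)$-curves classified in Proposition \ref{lem2.1}. Under the assumption $s\ge -1$ the two cases with $s=-3$ are excluded, leaving only three possibilities: (a) $F\cdot H\ge d-5$; (b) $g=4s+16$ and $F\sim (s+4)H-C$; (c) $s\ge 1$ odd, $g=\tfrac{5}{2}(s+5)$ and $F\sim\tfrac{s+5}{2}H-C$.

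First I would check that $|C-H|$ is non-empty, so that the statement is not vacuous. Since $(C-H)^2=2g-2-2d+6=2s+4$, Riemann--Roch on $S$ gives $\chi(C-H)=s+4\ge 3$; and $h^2(C-H)=h^0(H-C)=0$ because $(H-C)\cdot H=6-d<0$ under \eqref{eq2.1}. Hence $h^0(C-H)\ge 3$.

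To rule out each of (a), (b), (c) I would use ampleness to force a contradiction from the effectiveness of $D$. In case (a), $H$ is ample, but $D\cdot H=d-6-F\cdot H\le -1<0$, contradiction. In cases (b) and (c) the hypotheses $g\ge 2$ and $g+s>2$ of Proposition \ref{prop2.1} are immediate from \eqref{eq2.1}, so $C$ itself is ample; hence every non-zero effective divisor pairs strictly positively with $C$. Substituting $F$ and reducing via $d=3s+16$ in (b) and $d=(3s+25)/2$ in (c), a direct computation of $D\cdot C=2C^2-(\text{coefficient of }H)\cdot d$ gives $D\cdot C=-3s^2-15s-20$ in (b) and $D\cdot C=-\tfrac{3}{4}(s-1)(s+3)$ in (c). In (b) this is strictly negative for every $s\ge -1$ (the discriminant $225-240$ is negative), a contradiction. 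In (c) it is non-positive, and strictly negative unless $s=1$; at $s=1$ one has $D=2C-4H$, which is non-zero in $\Pic(S)=H\ZZ\oplus C\ZZ$, so ampleness of $C$ still forces $D\cdot C>0$, contradicting $D\cdot C=0$. The main obstacle is purely organisational, namely enumerating the three subcases and getting the intersection numbers right; no new geometric input is needed beyond Propositions \ref{prop2.1} and \ref{lem2.1}.
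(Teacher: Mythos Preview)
Your proof is correct and follows the same strategy as the paper's: classify the possible fixed $(-2)$-components $F$ via Proposition~\ref{lem2.1}, then contradict the effectiveness of $M:=C-H-F$ using ampleness (of $H$ or of $C$, Proposition~\ref{prop2.1}). The only organisational difference is that the paper invokes the sharper bound $M\cdot H\ge 3$ (coming from $h^0(M)=h^0(C-H)\ge 3$), which gives $F\cdot H\le d-9$ and thereby eliminates your cases (a) and (b) in one stroke; you instead handle (a) with the weaker bound $D\cdot H\ge 0$ and dispose of (b) by a separate $D\cdot C$ computation, which is equally valid. Case (c) is treated identically in both proofs, your computation $D\cdot C=-\tfrac{3}{4}(s+3)(s-1)$ agreeing with the paper's $M\cdot C=-\tfrac{1}{4}(3s^2+6s-9)$.
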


\begin{proof}
Observe first that $|C -H|$ is effective and has $h^0(C-H) \geq 3$, since $(C-H)^2 = 2s+4 \geq 2$. Assume $|C - H|$ admits fixed components. 
Choose one of them and denote it by $F$. Note that $F$ is a $(-2)$-curve. So we may write
$$
C -H \sim M + F.
$$
Then
$$ 
2 < M \cdot H = (C -H) \cdot H - F \cdot H = d-6 - F \cdot H.
$$
So 
$$
F \cdot H \leq d-9.
$$
By Proposition \ref{lem2.1}, the only possibility is 
$$s\ge1,\ g=\frac52(s+5),\ F\sim\frac{s+5}2H-C.$$
In this case,
\begin{eqnarray*}
M\cdot C=\left(2C-\frac{s+7}2H\right).C&=&4g-4-\frac{s+7}2d\\
&=&10s+46-\frac{s+7}4(3s+25)\\
&=&-\frac14(3s^2+6s-9)\le0.
\end{eqnarray*}
This contradicts Proposition \ref{prop2.1}.
\end{proof}

\begin{cor}  \label{cor2.3}
 Suppose \eqref{eq2.1} holds with $s\ge-1$.
Let $D$ be an effective divisor on $S$ with $h^0(S,D) \geq 2$ and $h^0(S,C-D) \geq 2$. Then
the linear systems $|D|$ and $|C -D|$ have no fixed components.
\end{cor}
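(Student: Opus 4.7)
The plan is to argue by contradiction. By the symmetry of the hypotheses under $D \leftrightarrow C - D$, it suffices to show that $|D|$ has no fixed components; suppose for contradiction that $F$ is a fixed component, which is then an irreducible $(-2)$-curve. Decompose $D = M + F^*$ with $F^* \ge F$ the fixed part and $M$ the movable part, so that $h^0(M) = h^0(D) \ge 2$. Since $|M|$ has no fixed components, Saint-Donat's theorem on K3 surfaces gives $M^2 \ge 0$, with either $M^2 \ge 2$ and $h^0(M) = \tfrac{1}{2} M^2 + 2$, or $M^2 = 0$ and $M = kE$ for a smooth elliptic curve $E$ and some $k \ge 1$.

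The hypothesis $h^0(C - D) \ge 2$ implies $C - D$ is effective and nonzero, so $D \cdot H < C \cdot H = d$ by ampleness of $H$; this yields $M \cdot H \le D \cdot H - F \cdot H < d - F \cdot H$. I then apply Proposition \ref{lem2.1} to $F$; since $s \ge -1$, the $s = -3$ cases do not apply, and one of the following holds: (a) $F \cdot H \ge d - 5$, (b) $g = 4s + 16$ and $F \sim (s+4)H - C$, or (c) $s$ odd with $s \ge 1$, $g = \tfrac{5(s+5)}{2}$, and $F \sim \tfrac{s+5}{2}H - C$.

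Case (a) gives $M \cdot H \le 4$. Hodge index yields $(M \cdot H)^2 \ge 6 M^2$, forcing $M^2 \le 2$. Writing $M \sim \alpha H + \beta C$, one has the identity $6 M^2 = (M \cdot H)^2 - \beta^2 (d^2 - 12(g-1))$, and from $g \ge 2s + 13$ a direct check gives $d^2 - 12(g-1) \ge 24$. This rules out $M^2 = 2$ (which would force $\beta = 0$ and then $M \cdot H = 6\alpha = 4$, with no integer solution) and $M^2 = 0$ (which would force $E \cdot H = |\beta|\sqrt{d^2 - 12(g-1)} \ge 5$ and hence $M \cdot H \ge 5$, contradicting $M \cdot H \le 4$).

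In cases (b) and (c), $F \sim aH - C$ with $a = s+4$ or $a = (s+5)/2$, and one has
\[
(C - D) \cdot C = \bigl[2(2g-2) - ad\bigr] - M \cdot C - (F^* - F) \cdot C,
\]
with $M \cdot C > 0$ and $(F^* - F) \cdot C \ge 0$ by ampleness of $C$ (Proposition \ref{prop2.1}). Computing, $2(2g-2) - ad = -3s^2 - 12s - 4$ in case (b) and $(59 - 3s^2)/4$ in case (c); these are non-positive for $s \ge 0$ in (b) and $s \ge 5$ in (c), which immediately yields $(C - D) \cdot C < 0$, contradicting ampleness of $C$. The main obstacle is the residual subcases $s = -1$ in (b) and $s \in \{1,3\}$ in (c), where $2(2g-2) - ad > 0$. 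Here the bound $M \cdot H \le 7$ or $\le 9$ together with the Diophantine equation $6 M^2 = (M \cdot H)^2 - \beta^2 (d^2 - 12(g-1))$ and integrality leaves only the candidates $M = H$ and $M = C - H$. Substituting each into $D = M + F$ produces $D \sim (s+5)H - C$, $\tfrac{s+7}{2}H - C$, or $2H$, and in every instance a direct computation gives $(C - D) \cdot C \le 0$ with $D \not\sim C$, again contradicting ampleness of $C$ since $C - D$ would have to be a nonzero effective divisor.
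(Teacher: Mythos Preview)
Your argument is correct, and the overall architecture matches the paper's (reduce by symmetry, suppose a $(-2)$-fixed component $F$ exists, invoke Proposition~\ref{lem2.1}, then split into the generic case and the residual triples $(g,s)\in\{(12,-1),(15,1),(20,3)\}$). The execution, however, differs in two places.

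\medskip
\textbf{Case (a).} The paper eliminates $F\cdot H\ge d-5$ in one stroke: it uses that $h^0\ge2$ forces intersection with $H$ to be at least $3$ for \emph{both} $D$ and the movable part of $|C-D|$, yielding $F\cdot H\le d-6$ directly. You instead only use $D\cdot H\le d-1$, get $M\cdot H\le4$, and then run a Hodge-plus-lattice argument (via $6M^2=(M\cdot H)^2-\beta^2\bigl(d^2-12(g-1)\bigr)$ and $d^2-12(g-1)\ge24$) to show no such $M$ exists. Both work; the paper's route is shorter, yours avoids ever justifying the ``$\ge3$'' bound.

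\medskip
\textbf{Residual cases.} The paper writes $C-D\sim mH+nC$, sandwiches $(C-D)\cdot H$ and $(C-D)\cdot C$ between explicit bounds coming from $h^0(D)\ge2$ and $h^0(C-D-F)\ge2$, and solves the resulting linear inequalities for $(m,n)$. You instead classify the movable part $M$ of $D$ directly from the lattice identity and the bound on $M\cdot H$, obtaining $M\in\{H,\,C-H\}$ (in fact only $M=H$ when $(g,s)=(20,3)$), and then check $(C-M-F)\cdot C\le0$. This is arguably cleaner, since it reuses the same lattice tool from case~(a).

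\medskip
One point of sloppiness: in the residual analysis you write ``substituting each into $D=M+F$'', but you only know $D=M+F^*$ with $F^*\ge F$. This does not damage the argument, since $(C-D)\cdot C\le(C-M-F)\cdot C$ by ampleness of $C$, and it is the latter quantity you show is $\le0$; but the write-up should say so explicitly.
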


\begin{proof}
Since the statement is symmetric in $D$ and $C-D$, it is sufficient to prove the corollary for $C-D$. 

\medskip
Suppose $F$ is a $(-2)$-curve in the base locus of $|C -D|$. We may write
$$
C -D \sim M + F.
$$
Since $h^0(S,M)=h^0(S,C-D) \geq 2$, we have 
$$
3\le M\cdot H = (C-D) \cdot H -F \cdot H = d - D \cdot H - F \cdot H.
$$ 
Since $h^0(S,D) \geq 2$, we have  $D \cdot H \geq 3$. So
$$
1 \leq F \cdot H \leq d-6.
$$

By Proposition \ref{lem2.1}, the case $F\cdot H=d-6$ cannot occur since we are assuming $s\ge-1$ and we are left with the possibilities
\begin{equation}\label{eq104}
g=4s+16,\ F\cdot H=d-8,\ F\sim(s+4)H-C
\end{equation}
and
\begin{equation}\label{eq105}
g=\frac52(s+5),\ F\cdot H=d-10,\ F\sim\frac{s+5}2H-C.
\end{equation}
Moreover, since $|D|$ and $|C-D-F|$ are both effective, so is $|C-F|$. It follows from Proposition \ref{prop2.1} that $(C-F)\cdot C>0$.

\medskip
For \eqref{eq104}, we have
\begin{eqnarray*}
(C-F)\cdot C=\big(2C-(s+4)H\big)\cdot C&=&4g-4-(s+4)d\\
&=&16s+60-(s+4)(3s+16)\\
&=&-(3s^2+12s+4).
\end{eqnarray*}
This contradicts the fact that $(C-F)\cdot C>0$ except when $s=-1$.

\medskip
For \eqref{eq105}, we argue similarly. We have
\begin{eqnarray*}
(C-F)\cdot C=\left(2C-\frac{s+5}2H\right)\cdot C&=&2(5s+25)-4-\frac{s+5}4(3s+25)\\
&=&-\frac14(3s^2-59).
\end{eqnarray*}
Since $s$ is odd and $s\ge1$, this is a contradiction except for $s=1$ and $s=3$.

\medskip
This leaves us with the three possibilities
 \begin{equation}\label{eq101}
 (g,s)=(12,-1),\ (15,1),\ (20,3).
 \end{equation}
 In these cases, it is not sufficient to consider $(C-F)\cdot C$. However, in all three cases, we can show that the two conditions
 \begin{equation}\label{eq120}
 h^0(D)\ge2,\ h^0(C-D-F)\ge2
 \end{equation}
 lead to a contradiction. Note that \eqref{eq120} implies that $D\cdot H\ge3$ and $(C-D-F)\cdot H \ge3$ and hence 
 \begin{equation}\label{eq123}
 F\cdot H+3\le (C-D)\cdot H\le C\cdot H-3.
 \end{equation}
 Similarly, using Proposition \ref{prop2.1}, we obtain
  \begin{equation}\label{eq124}
 F\cdot C+1\le (C-D)\cdot C\le C\cdot C-1.
 \end{equation}
 
 Suppose first that $(g,s)=(12,-1)$, so that \eqref{eq2.1} and \eqref{eq104} give
 \begin{equation*}
 C\cdot H=13,\  F\cdot H=5,\ F\sim 3H-C,\ F\cdot C=17.
 \end{equation*}
 Writing $C-D\sim mH+nC$, \eqref{eq123} and \eqref{eq124} give
 \begin{equation}\label{eq125}
 8\le6m+13n\le10
 \end{equation}
 and
 \begin{equation}\label{eq126}
 18\le13m+22n\le21.
 \end{equation}
 Now $13\times\eqref{eq125}-6\times\eqref{eq126}$ gives
 $$-22\le37n\le22,$$so $n=0$. But now \eqref{eq125} gives an immediate contradiction.
 
 \medskip
 Next suppose that $(g,s)=(15,1)$. Then \eqref{eq2.1} and \eqref{eq105} give
 \begin{equation*}
 C\cdot H=14,\  F\cdot H=4,\ F\sim 3H-C,\ F\cdot C=14.
 \end{equation*}
 So \eqref{eq124} gives
  \begin{equation}\label{eq128}
 15\le14m+28n\le27.
 \end{equation}
 Since $14m+28n$ is divisible by $14$, this is an immediate contradiction.

\medskip
 The final case $(g,s)=(20,3)$ is a little more complicated (but also  more interesting). Here \eqref{eq2.1} and \eqref{eq105} give
 \begin{equation*}
 C\cdot H=17,\  F\cdot H=7,\ F\sim 4H-C,\ F\cdot C=30.
 \end{equation*}
 So \eqref{eq123} and \eqref{eq124} give
  \begin{equation}\label{eq121}
 10\le 6m+17n\le14
 \end{equation}
 and
\begin{equation}\label{eq122}
31\le17m+38n\le 37.
\end{equation}
Now $17\times\eqref{eq121}-6\times\eqref{eq122}$ gives 
$$-52\le61n\le52,$$
i.e. $n=0$. Now \eqref{eq121} gives $m=2$, which also satisfies \eqref{eq122}. Hence we must have $C-D\sim 2H$. But then $|C-D|$ does not have a fixed component. This is a contradiction.
\end{proof}

We now consider curves $D$ on $S$ with $D^2=0$.

\begin{prop}\label{prop2.5}
Suppose that \eqref{eq2.1} holds with $s\ge-1$ and let $D$ be an effective divisor with $D^2=0$ and without fixed components. Then $D\sim rE$ for some integer $r$, where $E$ is irreducible with $E^2=0$ and $D=E_1+\ldots +E_r$ with $E_i\sim E$. Moreover one of the following holds:
\begin{itemize}
\item $s\ge0$, $g=4s+13$, $E\sim(s+3)H-C$ or $E\sim 3C-4H$;
\item $s\ge4$ and even, $g=\frac{5s}2+11, E\sim\frac{s+4}2H-C$ or $E\sim3C-5H$.
\end{itemize}
\end{prop}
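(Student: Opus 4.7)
\emph{Plan.} The proof splits naturally into two pieces: a structural part, based on K3 theory, and a finite Diophantine enumeration under \eqref{eq2.1}.

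First I would deduce $D=E_1+\cdots+E_r$ from the classical theory. Since $D^2=0$ and $|D|$ has no fixed components, any two members of $|D|$ meet in $D^2=0$ points and are therefore disjoint, so $|D|$ is base-point free. A standard theorem of Saint-Donat then guarantees that $D\sim rE$ for some irreducible curve $E$ of arithmetic genus $1$ (equivalently $E^2=0$), with $h^0(D)=r+1$; the members of the pencil $|E|$ supply the decomposition. This reduces the problem to classifying irreducible $E\sim mH+nC$ on $S$ with $E^2=0$, under \eqref{eq2.1} and $s\ge-1$.

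Next I would set up the Diophantine equation. Irreducibility forces $\gcd(m,n)=1$. The condition $E^2=0$ becomes $3m^2+dmn+(g-1)n^2=0$, and eliminating $m$ by $r:=E\cdot H=6m+dn$ gives
\[
r^2=n^2\bigl[d^2-12(g-1)\bigr].
\]
So $d^2-12(g-1)=k^2$ for some non-negative integer $k$, and $r=|n|k$. Writing $a=d-k$, $b=d+k$, one has $ab=12(g-1)$, $a+b=2d$, and $a,b$ are both positive and even. Setting $a=2\alpha+6$, $b=2\beta+6$ converts this to the tidy pair
\[
\alpha\beta=3(s+2),\qquad g=\alpha+\beta+s+6,
\]
so the hypothesis $g\ge 2s+13$ becomes $\alpha+\beta\ge s+7$.

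The third step is to enumerate divisor pairs of $3(s+2)$ with $\alpha\le\beta$ satisfying $\alpha+\beta\ge s+7$. Equivalently $\alpha^2-(s+7)\alpha+3(s+2)\ge 0$; the smaller root of this quadratic is $\alpha^-=\bigl[(s+7)-\sqrt{s^2+2s+25}\bigr]/2$, and since $\sqrt{s^2+2s+25}>s+1$, one has $\alpha^-<3$ for every integer $s\ge-1$. Combined with $\alpha\le\sqrt{\alpha\beta}\le\alpha^+$, this forces $\alpha\in\{1,2\}$. The choice $\alpha=1$ gives $\beta=3(s+2)$, so $d=3s+13$, $g=4s+13$, and requires $s\ge 0$; the choice $\alpha=2$ requires $s$ even, gives $d=(3s+22)/2$, $g=(5s+22)/2$, and $\alpha+\beta\ge s+7$ forces $s\ge 4$. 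In each case the quadratic $3x^2+dx+(g-1)=0$ in $x=m/n$ has two rational roots, giving the two primitive integer classes: $(-4,3)$ and $(-(s+3),1)$ for $g=4s+13$; $(-5,3)$ and $(-(s+4)/2,1)$ for $g=(5s+22)/2$. Proposition~\ref{prop2.1} (ampleness of $C$) then selects the effective representative of each pair, producing the four classes listed.

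The main technical obstacle is the third step: once the parity reduction to $\alpha\beta=3(s+2)$ is made, the enumeration is a short monotonicity check, but the reduction itself requires care (in particular handling the factor of $12$ in $ab=12(g-1)$ and verifying that $a,b$ are both even). A minor subsidiary point is verifying that the chosen primitive class is effective rather than its negative, which follows from Riemann--Roch on the K3 surface together with $E\cdot H>0$ for the explicit representatives listed.
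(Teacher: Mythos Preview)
Your proposal is correct and follows essentially the same route as the paper: Saint-Donat for the structural decomposition $D\sim rE$, then a Diophantine analysis of $d^2-12(g-1)=k^2$ reducing to two cases. Your factorisation $\alpha\beta=3(s+2)$, $\alpha+\beta=g-s-6$ is a repackaging of the paper's substitution $g-s-6=t+2b$ (indeed $\alpha=b$, $\beta=t+b$), and your quadratic bound $\alpha^-<3$ replaces the paper's case-check for $3\le b\le5$; otherwise the arguments coincide.
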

\begin{proof}
By a result in \cite{sd} (see \cite[Proposition 2.1]{f} for a statement), $D=E_1+\ldots E_r\sim rE$ as in the statement. We need only check that $E$ has one of the stated forms. For this, let $E\sim mH+nC$, so that
\begin{equation}\label{eq106}
E^2 = 6m^2 + 2dmn + (2g-2)n^2.
\end{equation}
For an integer solution of the equation $E^2=0$, we require the discriminant $d^2 - 6(2g-2)$ of \eqref{eq106} to be a perfect square. So suppose 
$$d^2 -6(2g-2) = g^2 - (2s + 12)g + s^2 +12 = t^2$$
for some $t\ge0$, i.e.
\begin{equation}\label{eq107}
(g-s-6)^2-t^2=12s+24.
\end{equation}
Write $g-s-6=t+2b$. Since $s\ge-1$, \eqref{eq2.1} implies that $b>0$ and  $t\ge\max\{s+7-2b,0\}$. The equation \eqref{eq107} gives
\begin{equation}\label{eq110}
b(t+2b)=3s+6+b^2,
\end{equation}
so that 
\begin{equation}\label{eq108}
b^2\ge b(s+7)-3s-6
\end{equation}
On the other hand, since $bt\ge0$, $b^2=3s+6-bt\le3s+6$. Combining this with \eqref{eq108}, we get
\begin{equation}\label{eq109}
b(s+7)\le6s+12.
\end{equation}
If $b\ge6$, \eqref{eq109} gives an immediate contradiction. For $3\le b\le5$, we can calculate $t$ directly from \eqref{eq110} and show that $t+2b<s+7$. This leaves us with $b=1$ and $b=2$.

\medskip
When $b=1$, \eqref{eq110} gives $t=3s+5$ and $g=t+2b+s+6=4s+13$. The equation $E^2=0$ (see \eqref{eq106}) now gives
$$\frac{m}n=\frac{-d\pm t}6=-\frac43\mbox{ or }-(s+3).$$
When $b=2$, we get similarly $t=\frac{3s+2}2$, $g=\frac{5s}2+11$ and
$$\frac{m}n=-\frac53\mbox{ or } -\frac{s+4}2.$$
The restrictions on $s$ come from \eqref{eq2.1}. To see in each case that there is an effective divisor $E$ in the given divisor class, one checks that $E\cdot H>0$. Since $E$ is primitive, it must also be irreducible.
\end{proof}

\begin{cor}\label{cor2.4}
Suppose that \eqref{eq2.1} holds with $s\ge-1$ and that $D$ and $C-D$ are effective divisors without fixed components. Then
\begin{itemize}
\item[(i)] $D^2\ne0$, $(C-D)^2\ne0$;
\item[(ii)] $h^0(C,D|_C)=h^0(S,D)=\frac{D^2}2+2$.
\end{itemize}
\end{cor}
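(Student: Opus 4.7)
The plan is to establish (i) first, then derive (ii) from standard vanishing on the K3 surface.

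For (i), I will argue by contradiction. Suppose $D^2=0$. Since $D$ is effective without fixed components, Proposition \ref{prop2.5} forces $D\sim rE$ with $E$ irreducible in one of the four explicit families: $E\sim(s+3)H-C$ or $E\sim 3C-4H$ (when $g=4s+13$), or $E\sim\frac{s+4}{2}H-C$ or $E\sim 3C-5H$ (when $g=\frac{5s}{2}+11$, $s$ even). For each family I would compute $(C-rE)\cdot H$; since $H$ is ample and $C-D$ is a nonzero effective divisor, this intersection must be positive, which restricts $r$ to very few values (often no $r\ge1$ works at all). For the surviving $r$, I would compute $(C-rE)^2$ using the numerical data from \eqref{eq2.1} and show it is negative. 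This contradicts the nefness of $C-D$: any nonzero effective divisor without fixed components on a smooth projective surface is nef, because for every irreducible curve $F$ one can find a member of the linear system not containing $F$, forcing the intersection to be $\ge0$. Symmetry in $D$ and $C-D$ then yields $(C-D)^2\neq0$ as well, completing (i).

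For (ii), observe that $D$ is effective without fixed components, hence nef, and by (i) $D^2>0$, so $D$ is nef and big. Since $K_S=0$, Kawamata--Viehweg vanishing gives $h^1(S,D)=h^2(S,D)=0$, and Riemann--Roch on the K3 surface then yields $h^0(S,D)=\chi(D)=\frac{D^2}{2}+2$. To compare with $h^0(C,D|_C)$, restrict via the short exact sequence
\[
0\to\cO_S(D-C)\to\cO_S(D)\to D|_C\to0.
\]
It suffices to show $h^0(S,D-C)=0$ and $h^1(S,D-C)=0$. The first holds because $C-D$ is effective and nonzero, so $-(C-D)$ has no global sections. For the second, Serre duality on $S$ (using $K_S=0$) gives $h^1(S,D-C)=h^1(S,C-D)$, and this vanishes by the same Kawamata--Viehweg argument applied to $C-D$, which is nef and big by the symmetric version of (i).

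The main obstacle is the case analysis at the start of (i): for each of the four classes in Proposition \ref{prop2.5} one must rule out every admissible $r\ge1$ producing an effective $C-rE$ without fixed components. The bookkeeping is mildly tedious because each class gives different intersection numbers, but in every case the check reduces to a single polynomial inequality in $s$ coming from \eqref{eq2.1}.
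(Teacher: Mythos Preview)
Your proposal is correct and follows essentially the same strategy as the paper. Two small differences are worth noting. For (i), the paper avoids your case analysis on $r$: since $E\cdot C>0$ (Proposition~\ref{prop2.1}) and $E^2=0$, one has $(C-rE)^2=C^2-2rE\cdot C\le C\cdot(C-2E)$ for every $r\ge1$, so it suffices to verify $C\cdot(C-2E)<0$ once for each of the four classes $E$; the detour through $(C-rE)\cdot H$ is unnecessary. For (ii), where you invoke Kawamata--Viehweg vanishing for nef and big divisors, the paper cites Saint-Donat's results on linear systems on K3 surfaces to obtain the same $h^1$-vanishing; both arguments are valid and yield the identical conclusion.
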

\begin{proof} (i) Suppose that $(C-D)^2=0$. By the proposition, we have $C-D=rE$ with $E$ as in the statement. Moreover $r\ge1$ since $C-D$ is effective and $E\cdot C\ge0$ (in fact $E\cdot C>0$ in view of Proposition \ref{prop2.1}). Since also $E^2=0$, we have  
$$D^2=C^2-2rE\cdot C=C\cdot (C-2rE)\le C\cdot (C-2E).$$
Using the values of $E$ from the proposition, we see that $D^2<0$, contradicting the assumption that $D$ has no fixed components. Interchanging $D$ and $C-D$ in this argument, we obtain a similar contradiction when $D^2=0$.

(ii) By (i), $(C-D)^2>0$, so the results of \cite{sd} (\cite[Proposition 2.1]{f}) apply to show that the general member of $|C-D|$ is smooth and irreducible and 
$$h^1(S,D-C)=h^1(S,C-D)=0.$$ 
Moreover, $D-C$ is not effective, so $h^0(S,D-C)=0$. The first equality in (ii) now follows from the cohomology sequence
$$
0 \ra H^0(S,D-C) \ra H^0(S,D) \ra H^0(C,D|_C) \ra H^1(S,D-C).
$$
For the second equality, we note that (i) implies that $h^1(S,D)=0$ and $h^2(S,D)=h^0(S,-D)=0$, so
$$h^0(S,D)=\chi(D)=\frac{D^2}2+2.$$
\end{proof}

\section{Proof of theorems}\label{main}

In this section we prove our main theorems. We start with a lemma.

\begin{lem} \label{lem3.1}
Suppose that \eqref{eq2.1} holds with $s \geq -1$.
Then $H|_C$ is a generated line bundle on $C$ with $h^0(C,H|_C) = 5$ and 
$$
S^2H^0(C,H|_C) \ra H^0(C,H^2|_C)
$$ 
is not injective.
\end{lem}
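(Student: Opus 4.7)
The plan is to treat the three parts of the lemma in sequence: generation, dimension count, and non-injectivity of the multiplication map. For generation, since $S\subset\PP^4$ is embedded by $|H|$, the bundle $H$ is very ample on $S$, hence its restriction $H|_C$ is very ample, and in particular generated by global sections.

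For the dimension $h^0(C,H|_C)=5$, I would use the restriction sequence
\begin{equation*}
0\to H^0(S,H-C)\to H^0(S,H)\to H^0(C,H|_C)\to H^1(S,H-C)\to H^1(S,H).
\end{equation*}
The first term vanishes because $(H-C)\cdot H=6-d<0$ by the hypotheses in \eqref{eq2.1} (so $H-C$ cannot be effective). For the $H^1$ term I would apply Serre duality on the K3 surface to get $h^1(S,H-C)=h^1(S,C-H)$, and then invoke Corollary~\ref{cor2.2} (which gives that $|C-H|$ has no fixed components) together with $(C-H)^2=2s+4\ge2>0$; by the standard Saint-Donat vanishing for base-point-free big divisor classes on a K3, $h^1(S,C-H)=0$. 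Since $H$ is the hyperplane class of the non-degenerate surface $S\subset\PP^4$, we have $h^0(S,H)=5$, and hence $h^0(C,H|_C)=5$.

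The core of the lemma is the non-injectivity in part three, and this is where the type $(2,3)$ of $S$ enters decisively. Because the preceding part gives the isomorphism $H^0(S,H)\cong H^0(C,H|_C)$, induced by restriction from $\PP^4$, we may identify
\begin{equation*}
S^2H^0(C,H|_C)\;\cong\;S^2H^0(S,H)\;\cong\;H^0(\PP^4,\cO(2)),
\end{equation*}
and the multiplication map in the lemma factors as
\begin{equation*}
S^2H^0(C,H|_C)\;\cong\;H^0(\PP^4,\cO(2))\;\longrightarrow\;H^0(S,H^2)\;\longrightarrow\;H^0(C,H^2|_C).
\end{equation*}
The first arrow is restriction from $\PP^4$ to $S$, whose kernel is by definition the space $I_S(2)$ of quadrics containing $S$. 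Since $S$ is a complete intersection of type $(2,3)$ in $\PP^4$, it lies on a unique (up to scalar) quadric, so $\dim I_S(2)=1$. Any such quadric $Q$ restricts to zero on $S$, hence \emph{a fortiori} to zero on $C$, and thus represents a non-zero element of the kernel of $S^2H^0(C,H|_C)\to H^0(C,H^2|_C)$.

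No step looks particularly delicate: the real content is the identification of the kernel with $I_S(2)$, which uses only that $H|_C$ has exactly $5$ sections (so no extra quadric relations appear upon restriction to $C$) and the defining fact that $S$ is cut out by a pencil consisting of one quadric and one cubic. The hypothesis $s\ge-1$ is used only via Corollary~\ref{cor2.2} to kill $h^1(S,C-H)$; the inequality $d\ge13$ needed to make $H-C$ non-effective follows automatically from \eqref{eq2.1}.
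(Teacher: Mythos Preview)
Your proof is correct and follows essentially the same route as the paper's: the restriction sequence combined with Corollary~\ref{cor2.2} and $(C-H)^2\ge2$ to get $h^0(C,H|_C)=5$, and the quadric containing $S$ to produce the kernel element. You simply spell out in more detail what the paper compresses into the single sentence ``the last assertion follows from the fact that $S$ is contained in a quadric'', and you add the (trivial) observation that $H|_C$ is very ample, which the paper omits.
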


\begin{proof}
Consider the exact sequence
$$
0 \ra \cO_S(H-C) \ra \cO_S(H) \ra \cO_C(H|_C) \ra 0.
$$
$H -C$ is not effective, since $(H-C) \cdot H = 6 -d < 0$. So we have 
$$
0 \ra H^0(S,H) \ra H^0(C,H|_C) \ra H^1(S,H-C) \ra 0.
$$
Now 
$$
(C-H)^2 = 2g-2 -2d + 6 = 2 s + 4 \geq 2,
$$
from which it follows that $|C-H|$ is effective. Since $|C-H|$ has no fixed component by Corollary \ref{cor2.2}, it follows that its general element is smooth and irreducible (see \cite{sd} or \cite[Proposition 2.1]{f}). Hence $h^1(S,H-C) = 0$ and 
therefore
$h^0(C,H|_C) = h^0(S,H) =5$. The last assertion follows from the fact that $S$ is contained in a quadric. 
\end{proof}

\begin{cor}\label{cor3.2}
Suppose that \eqref{eq2.1} holds with $s\ge-1$ and $\Cl(C)=\left[\frac{g-1}2\right]$. Then there exists a stable vector bundle of rank $2$ and degree $g-s$ on $C$ with $h^0(E)=4$.
\end{cor}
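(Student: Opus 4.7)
The plan is to use Lemma~\ref{lem3.1}, which provides a globally generated line bundle $L := H|_C$ on $C$ with $h^0(C,L) = 5$ and a non-zero element $q$ in $\ker(S^2 H^0(C,L) \to H^0(C,L^2))$. Geometrically, the morphism $\varphi_L \colon C \to \PP^4$ has image contained in the quadric hypersurface $Q \subset \PP^4$ cut out by $q$, i.e. the unique quadric containing the K3 surface $S$. I would use this quadric structure to manufacture a rank~$2$ bundle on $C$.

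My first attempt would go via the spinor bundle of $Q$. When $Q$ is smooth, the spinor bundle $\cF$ on the quadric threefold is a rank~$2$ vector bundle with $\det \cF(1) = \cO_Q(1)$ and $h^0(Q, \cF(1)) = 4$. Setting $E := \cF(1)|_C$ gives $\rk E = 2$, $\det E = L$ and $\deg E = d$. The bound $h^0(E) \ge 4$ is obtained by restricting $H^0(Q,\cF(1))$ successively along $Q \supset S \supset C$: the map to $H^0(S, \cF(1)|_S)$ is injective because $S \in |3H|$ on $Q$ and $H^0(Q, \cF(-2)) = 0$, while the map to $H^0(C, E)$ is injective once we verify $H^0(S, \cF(1)|_S(-C)) = 0$, which follows from $(H-C) \cdot H = 6 - d < 0$ together with the $\mu$-stability of $\cF(1)|_S$ on $S$ (a Bogomolov-type restriction argument using Proposition~\ref{prop2.1}). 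If $Q$ is singular (rank $\le 4$), I would instead use the reflexive rank-$2$ sheaf associated to a family of planes on $Q$; since $S$ is smooth, $C$ avoids the singular locus of $Q$ and the same numerical outcome holds.

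For the exact count $h^0(E) = 4$ and for stability of $E$, the hypothesis $\Cl(C) = \left[\frac{g-1}{2}\right]$ is the key leverage. Suppose, aiming for a contradiction, that $E$ admits a destabilizing line sub-bundle $N$ (so $\deg N \ge d/2$) or that $h^0(E) \ge 5$. From $h^0(E) \ge 4$ and the exact sequence $0 \to N \to E \to E/N \to 0$, at least one of $N$, $E/N$ must carry at least two sections and have degree $\ge d/2$, yielding a line bundle with Clifford index $\gamma \le (g-s)/2 - 2$. For $s \ge -1$ this is strictly less than $\left[\frac{g-1}{2}\right]$, contradicting the maximality of $\Cl(C)$.

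The main obstacle I anticipate is the cohomological bookkeeping on $S$ required to secure $h^0(E) \ge 4$, in particular the vanishing $H^0(S, \cF(1)|_S(-C)) = 0$ and the parallel construction when $Q$ is singular. Once $E$ has been built with the correct numerical invariants, the upper bound $h^0(E) = 4$ and stability fall out cleanly from the Clifford-index hypothesis.
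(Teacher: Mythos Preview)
The paper's own proof is a two-line citation: it verifies the numerical inequality $g-s<2(\Cl(C)+2)$ (immediate from $s\ge-1$ and $\Cl(C)=\left[\frac{g-1}{2}\right]$) and then invokes \cite[Lemma~3.3]{cl}, which takes as input precisely the data furnished by Lemma~\ref{lem3.1} --- a generated line bundle $L$ of degree $<2(\Cl(C)+2)$ with $h^0(L)=5$ and a quadratic relation among its sections --- and outputs the required stable rank-$2$ bundle with $h^0=4$. Your proposal is, in effect, an attempt to reprove that external lemma in this particular geometric setting via the spinor bundle on the ambient quadric. That is a legitimate and more self-contained route, and the construction of $E=\cF(1)|_C$ with $\det E=L$ and $h^0(E)\ge4$ is along the right lines (modulo the restriction-stability and singular-quadric issues you yourself flag).

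The genuine gap is in your argument for stability and for the exact equality $h^0(E)=4$. You claim that ``at least one of $N$, $E/N$ must carry at least two sections and have degree $\ge d/2$, yielding a line bundle with Clifford index $\le(g-s)/2-2$''. But a line bundle of degree $\ge d/2$ with $h^0\ge2$ has Clifford index bounded \emph{below}, not above; presumably you meant the quotient $M=E/N$, which has degree $\le d/2$. Even with that fix you must still handle the case $h^0(M)\le1$: then $h^0(N)\ge3$ while $\deg N$ can be as large as $d$ (or larger if $\deg M<0$), and nothing you have written forces $\Cl(N)<\Cl(C)$. One needs additional input --- for instance, global generation of $E$ forces $M$ to be generated, so in the extreme case $M=\cO_C$ and $N\cong L$, and one must then argue separately that $H^0(C,\cF|_C)=0$ so that $L$ does not embed in $E$. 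The upper bound $h^0(E)\le4$ likewise does not follow from your sketch and needs its own argument. These are exactly the details that \cite[Lemma~3.3]{cl} packages, and it is there (not merely in the construction of $E$) that the hypothesis $d<2(\Cl(C)+2)$ is actually consumed.
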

\begin{proof}
Note that $g-s<2(\Cl(C)+2)$. The result now follows from the lemma and \cite[Lemma 3.3]{cl}.
\end{proof}

\begin{theorem}    \label{thm3.1}
Suppose that   $g$, $s$, $d$ are integers such that
\begin{equation}\label{eq11}
s \geq -1,\ g \geq 2s+14\mbox{ and } d=g-s.
\end{equation}
Then there exists a curve $C$ of genus $g$ having $\Cl(C) = \left[ \frac{g-1}{2} \right]$ 
and a stable vector bundle $E$ of rank $2$ and degree $d$ on $C$ with $\gamma(E) = \frac{g-s}{2} - 2$.  Hence
$$
\Cl_2(C) \le \frac{g-s}{2} -2 <\Cl(C).
$$
\end{theorem}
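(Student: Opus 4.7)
The plan is to take $C$ to be the smooth curve of genus $g$ and degree $d=g-s$ on the K3 surface $S$ of type $(2,3)$ in $\PP^4$ with $\Pic(S)=H\ZZ\oplus C\ZZ$ constructed at the start of Section \ref{k3}; the hypothesis $g\ge 2s+14$ strictly strengthens \eqref{eq2.1}, so every result of that section is available. Once $\Cl(C)=\lfloor(g-1)/2\rfloor$ has been established, Corollary \ref{cor3.2} immediately yields a stable rank-$2$ bundle $E$ of degree $g-s$ with $h^0(E)=4$, and then
$$\gamma(E)=\frac12\bigl((g-s)-2(4-2)\bigr)=\frac{g-s}{2}-2.$$
The numerical conditions $\mu(E)=(g-s)/2\le g-1$ (from $g\ge 12$) and $h^0(E)=4=2\cdot\rk(E)$ show that $E$ is an admissible test bundle in the definition of $\Cl_2(C)$, so $\Cl_2(C)\le\gamma(E)$. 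A short parity argument on $g$, using only $s\ge -1$, supplies the final strict inequality $(g-s)/2-2<\lfloor(g-1)/2\rfloor$.

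\medskip

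The real work is therefore to prove $\Cl(C)=\lfloor(g-1)/2\rfloor$. I would argue by contradiction: suppose a line bundle $A$ on $C$ satisfies $h^0(A),h^1(A)\ge 2$ and $\Cl(A)<\lfloor(g-1)/2\rfloor$. A Lazarsfeld-style extension argument (as in the proof of \cite[Theorem 1.1]{cl}, going back ultimately to Green--Lazarsfeld and Saint-Donat) should produce an effective divisor $D$ on $S$ with $h^0(S,D)\ge 2$, $h^0(S,C-D)\ge 2$ and $A\cong\cO_S(D)|_C$ (up to Serre duality and replacing $D$ by $C-D$). Corollary \ref{cor2.3} then rules out fixed components in $|D|$ and $|C-D|$, Corollary \ref{cor2.4}(i) gives $D^2,(C-D)^2>0$, and Corollary \ref{cor2.4}(ii) yields $h^0(C,D|_C)=h^0(S,D)=D^2/2+2$. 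Combining these,
$$\Cl(D|_C)=D\cdot C-2\left(\frac{D^2}{2}+1\right)=D\cdot(C-D)-2.$$

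\medskip

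Writing $D\sim aH+bC$ and using $H^2=6$, $C\cdot H=d$, $C^2=2g-2$, the intersection number $D\cdot(C-D)$ becomes an explicit integer quadratic form in $(a,b)$. The positivity constraints $D\cdot H\ge 3$, $(C-D)\cdot H\ge 3$, $D^2>0$ and $(C-D)^2>0$ cut out a bounded region of candidate classes, and the Hodge index theorem forces $D\cdot(C-D)$ to be large in comparison with $\sqrt{D^2(C-D)^2}$. Under these constraints the bound $D\cdot(C-D)-2\ge\lfloor(g-1)/2\rfloor$ should follow by a short case analysis, and this is the main obstacle. The strengthened hypothesis $g\ge 2s+14$ (rather than $g\ge 2s+13$) is precisely what prevents the borderline classes flagged in Propositions \ref{lem2.1} and \ref{prop2.5} from saturating this bound, and controlling them is exactly the technical purpose of the work done throughout Section \ref{k3}.
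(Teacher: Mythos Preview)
Your overall architecture matches the paper's: take $C$ on the K3 surface of Section~\ref{k3}, reduce via Corollary~\ref{cor3.2} to showing $\Cl(C)=\lfloor(g-1)/2\rfloor$, lift a Clifford-computing line bundle to a divisor $D$ on $S$, and then bound $D\cdot(C-D)-2$ from below by a lattice calculation. But two of your ingredients differ from what the paper actually does, and one of them is a genuine gap.

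\textbf{The lifting step.} Your ``Lazarsfeld-style extension argument'' is vaguer than what is needed. The paper first invokes Ciliberto--Pareschi \cite[Proposition~3.3]{cp}, which uses the ampleness of $C$ (Proposition~\ref{prop2.1}), to guarantee that $\Cl(C)$ is computed by a \emph{pencil}; only then does Donagi--Morrison \cite{dm} (cf.\ \cite[Proposition~3.1]{f}) produce the divisor $D$ on $S$. Crucially, that result delivers $D$ with the additional constraint $\deg(D|_C)=D\cdot C\le g-1$, which you have omitted from your list of inequalities.

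\textbf{The lattice estimate.} The Hodge index inequality $(D\cdot(C-D))^2\ge D^2\cdot(C-D)^2$ is not the tool the paper uses, and on its own it does not give what you want: $D^2$ and $(C-D)^2$ can each be as small as $2$, so the product is far too weak to force $D\cdot(C-D)\ge\lfloor(g-1)/2\rfloor+2$. The paper instead writes $D\sim mH+nC$, sets $f(m,n)=D\cdot C-D^2-2$, and studies $f$ as a quadratic in $m$ for each fixed $n$. The omitted degree constraint becomes
\[
md+(2n-1)(g-1)\le 0,
\]
and this is precisely what bounds $m$ into a range on which the monotonicity of $f(m,\cdot)$ is controlled. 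The analysis then splits into the cases $n<0$, $n>0$, $n=0$; the boundary values $(m,n)=(1,0)$ and $(-1,1)$ both give $f=d-8$, and $d-8\ge\lfloor(g-1)/2\rfloor$ is exactly where $g\ge 2s+14$ (rather than $2s+13$) is needed---so your final remark about the role of the strengthened hypothesis is correct, but the mechanism is this explicit endpoint check, not the $(-2)$- and $0$-classes of Propositions~\ref{lem2.1} and~\ref{prop2.5}.
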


\begin{proof}
Let $S$ and $C$ be as at the beginning of section \ref{k3}. In view of Corollary \ref{cor3.2}, it is sufficient to prove that $\Cl(C)=\left[\frac{g-1}2\right]$. Since $C$ is ample by Proposition \ref{prop2.1}, it follows from \cite[Proposition 3.3]{cp} that $\Cl(C)$ is 
computed by a pencil. If $\Cl(C) < \left[\frac{g-1}{2} \right]$, it then follows from \cite{dm} (see also \cite[Proposition 3.1]{f}) that
there is an effective divisor $D$ on $S$ 
such that $D|_C$ computes $\Cl(C)$ and satisfying
\begin{equation}\label{eq30}
h^0(S,D) \geq 2, \quad h^0(S,C-D) \geq 2 \quad  \mbox{and} \quad \deg(D|_C) \leq g-1.
\end{equation}
By Corollaries \ref{cor2.3} and \ref{cor2.4}, we have$$
\Cl(C) = \Cl(D|_C) =  D \cdot C - D^2 -2.
$$
To obtain a contradiction, it is therefore sufficient to prove that
$$
D \cdot C - D^2 - 2 \geq \left[ \frac{g-1}{2} \right].
$$
Writing $D \sim mH + nC$ with $m,n \in \ZZ$, we have $D \cdot C - D^2 - 2=f(m,n)$, where
\begin{equation*}
f(m,n) := -6m^2 + (1 -2n)dm + (n-n^2)(2g-2) -2.
\end{equation*}
We therefore require to prove that
\begin{equation}\label{fmn}
f(m,n)\ge\left[\frac{g-1}2\right].
\end{equation}
 
By Corollaries \ref{cor2.3} and \ref{cor2.4}, 
we have $D^2 > 0$. Also, by \eqref{eq30},  $D\cdot H\ge3$ and $(C-D)\cdot H\ge3$, hence $D\cdot H\le d-3$.  These inequalities and $\deg(D|_C) \leq g-1$ translate to\begin{equation} \label{eq3.1}
3m^2 + mnd + n^2(g-1) > 0,
\end{equation}  
\begin{equation} \label{eq3.2}
3\le 6m +nd \le d-3,
\end{equation}
\begin{equation} \label{eq3.3}
md + (2n-1)(g-1) \leq 0.
\end{equation}
We shall prove that \eqref{eq3.1} -- \eqref{eq3.3} imply \eqref{fmn}.

\medskip
Denote by
$$
a := \frac{1}{6}(d + \sqrt{d^2 -12(g-1)}) \quad \mbox{and}  \quad b:= \frac{1}{6}(d - \sqrt{d^2 -12(g-1)})
$$
the solutions of the equation $6x^2 - 2dx + 2g-2 = 0$. 
Note that $d^2 > 12(g-1)$. So $a$ and $b$ are positive real numbers; moreover, substituting $g=d+s$, we see that, since $s\ge-1$ and $d\ge s+14$, 
$$(d-12)^2<d^2-12(g-1)<(d-6)^2.$$
Hence
\begin{equation} \label{eq3.5}
1 < b < 2.
\end{equation}
Moreover, if $n\ne0$, \eqref{eq3.1} holds if and only if
\begin{equation}\label{m/n}
\frac{m}n<-a \mbox{ or }\frac{m}n>-b.
\end{equation}

If $n < 0$ and $\frac{m}n>-b$, then \eqref{eq3.2}
implies that $3 < n(d-6b) < 0$, because $n < 0$ and $d -6b = \sqrt{d^2 - 12(g-1)} > 0$, which gives a contradiction. Similarly, if $n>0$ and $\frac{m}n<-a$, we obtain $3<n(d-6a)<0$, again a contradiction. In view of \eqref{m/n}, it remains to consider the three possibilities
\begin{itemize}
\item $n<0$, $m>-an$;
\item $n>0$, $m>-bn$;
\item $n=0$.
\end{itemize}
In each case, we use \eqref{eq3.3} to prove \eqref{fmn}.

\medskip
If $n < 0$ and $m > -an$, we get from \eqref{eq3.3} 
$$
-an < m \leq \frac{(g-1)(1-2n)}{d} < \frac{(1-2n)d}{12},
$$
since $d^2 > 12(g-1)$. For a fixed $n$, $f(m,n)$ is strictly increasing as a function of $m$ for $m \leq \frac{(1-2n)d}{12}$
and therefore
\begin{eqnarray*}
f(m,n) &>& f(-an,n)\\
& = & \frac{d^2 - 12(g-1) +d\sqrt{d^2 - 12(g-1)}}{6} \cdot(-n) -2 \\
& \geq & \frac{d^2 - 12(g-1) +d\sqrt{d^2 - 12(g-1)}}{6}  -2. 
\end{eqnarray*}
The inequality \eqref{fmn} therefore holds if
$$
d^2-15g+3+d \sqrt{d^2 -12(g-1)} \geq 0.
$$ 
Since $d^2>12g$, it is therefore sufficient to prove that
$$d^2-15g+3d+3\ge0,$$ or equivalently $g(g-2s-12)+s^2-3s+3\ge0$. This is certainly true under our hypotheses. 

\medskip
If $n>0$ and $m > -bn$, \eqref{eq3.3} and \eqref{eq3.5} give 
\begin{equation} \label{eq3.4}
-(2n-1) \le m \leq -\frac{(g-1)(2n-1)}{d}. 
\end{equation}
For a fixed $n\ge1$, $f(m,n)$ is strictly decreasing for
$m\ge -\frac{(2n-1)d}{12}$ and hence throughout the range \eqref{eq3.4} (whenever this range is non-empty).
So 
\begin{eqnarray*}
f(m,n)-\frac{g-1}2& \geq&
f \left( -\frac{(g-1)(2n-1)}{d},n \right)-\frac{g-1}2\\ 
&=& \frac{g-1}{2} (2n-1)^2 \left( 1 - \frac{12(g-1)}{d^2} \right) -2\\
&\ge&\frac{g-1}2(2n-1)^2\left(1-\frac{g-1}g\right)-2\\
&=&\frac{g-1}{2g}(2n-1)^2-2\ge0 \mbox{ for }n\ge2.
\end{eqnarray*}
If $n=1$, then \eqref{eq3.4} gives $m=-1$ and
\begin{equation}\label{eqnew}
f(-1,1)=d-8\ge\left[\frac{g-1}2\right]\mbox{ for }g\ge2s+14.
\end{equation}

Finally, suppose $n=0$. Then
$$
f(m,0) = -6m^2 + dm -2.
$$
As a function of $m$ this takes its maximum value at $\frac{d}{12}$. By \eqref{eq3.1} and \eqref{eq3.3}, 
$$1\le m \leq \frac{g-1}{d} \leq \frac{d}{12}.$$ 
So $f(m,0)$ takes its minimal value in the allowable range at $m=1$. Hence 
\begin{equation}\label{eqnew2}
f(m,0)\ge f(1,0) = d-8\ge\left[\frac{g-1}2\right]\mbox{ for }g\ge2s+14.
\end{equation} 

\end{proof}

\begin{rem}\label{rmk3.3}
\begin{em}The case $s=-1$, $g$ even, is \cite[Theorem 3.7]{fo}. The case $s=-2$, $g$ odd (not included in our theorem) is \cite[Theorem 1.4]{fo}.
\end{em}\end{rem}

\begin{rem}\label{rmk3.4}
\begin{em}The result of Theorem \ref{thm3.1} is best possible in the sense that it fails for $g=2s+13$. In this case
$$\gamma(E)=\frac{g-s}2-2<\frac12\left[\frac{g-1}2\right]+2,$$
which contradicts \cite[Proposition 3.8]{cliff} if $\Cl(C)=\left[\frac{g-1}2\right]$. The points of failure in the proof are when $(m,n)=(1,0)$ and $(m,n)=(-1,1)$ (see \eqref{eqnew} and \eqref{eqnew2}), i.e. for $D\sim H$ and $D\sim C-H$. In fact $H|_C$ contributes to $\Cl(C)$, so, when $g=2s+13$,
$$\Cl(C)\le d-8<\left[\frac{g-1}2\right].$$
When $g=2s+14$ or $g=2s+15$, we have 
$d=\Cl(C)+8$, so that $H|_C$ computes the Clifford index. Thus $\Cl(C)$ is realised by an embedding of $C$ in ${\mathbb P}^4$, although the Clifford dimension of $C$ is $1$, i.e. $\Cl(C)$ is computed by a pencil (a fact used in the proof of Theorem \ref{thm3.1}).
\end{em}\end{rem}

\begin{cor}\label{cor11}
For $g \geq 12$, there exists a curve $C$ of maximal Clifford index $\left[ \frac{g-1}{2} \right]$ such that
$$
\Cl_2(C)=\frac{1}{2} \left[ \frac{g-1}{2} \right] +2.$$\end{cor}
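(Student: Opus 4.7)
The plan is to apply Theorem~\ref{thm3.1} with a choice of $s$ that makes $\frac{g-s}{2}-2$ equal to $\frac{1}{2}\left[\frac{g-1}{2}\right]+2$, namely $s=g-\left[\frac{g-1}{2}\right]-8$. I would split by the parity of $g$. Setting $\gamma:=\left[\frac{g-1}{2}\right]$: if $g=2\gamma+1$ is odd, then $s=\gamma-7$, and the hypotheses of Theorem~\ref{thm3.1} reduce to $\gamma\ge 6$ (i.e.\ $g\ge13$) and $2\gamma+1\ge 2\gamma$, the latter being automatic; if $g=2\gamma+2$ is even, then $s=\gamma-6$, and the hypotheses reduce to $\gamma\ge 5$ (i.e.\ $g\ge12$) and $2\gamma+2\ge 2\gamma+2$, which holds with equality. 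Together these two parity cases cover exactly the range $g\ge12$ asserted in the corollary.

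In both cases, Theorem~\ref{thm3.1} provides a curve $C$ of genus $g$ with $\Cl(C)=\left[\frac{g-1}{2}\right]$ and a stable bundle $E$ of rank $2$ with
$$\gamma(E)=\frac{g-s}{2}-2=\frac{1}{2}\left[\frac{g-1}{2}\right]+2,$$
which immediately yields the upper bound $\Cl_2(C)\le\frac{1}{2}\left[\frac{g-1}{2}\right]+2$.

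For the matching lower bound, I would invoke \cite[Proposition~3.8]{cliff}, the same result used to justify Remark~\ref{rmk3.4}: when $\Cl(C)$ attains its maximal value $\left[\frac{g-1}{2}\right]$, any semistable rank-$2$ bundle eligible in the definition of $\Cl_2(C)$ must satisfy $\gamma(E)\ge\frac{1}{2}\Cl(C)+2$. Combining this with the construction above gives equality.

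The only real obstacle is arithmetic bookkeeping: one must check that the two parity choices of $s$ indeed exhaust the range $g\ge12$ and simultaneously meet the constraints $s\ge-1$ and $g\ge2s+14$ of Theorem~\ref{thm3.1}. The endpoint $g=12$ is tight, as the inequality $g\ge2s+14$ becomes an equality there, but it remains within the allowed range. Beyond this verification, the corollary is essentially a direct specialization of Theorem~\ref{thm3.1} at the extremal value of $s$.
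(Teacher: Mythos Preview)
Your proof is correct and follows essentially the same approach as the paper: your parity-dependent choice of $s$ is precisely the paper's uniform choice $s=\left[\frac{g-14}{2}\right]$, and both arguments invoke \cite[Proposition~3.8]{cliff} for the lower bound. The only difference is cosmetic---you split into parity cases and verify the hypotheses of Theorem~\ref{thm3.1} explicitly, whereas the paper states the value of $s$ in closed form and leaves those checks implicit.
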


\begin{proof}
Taking $s=\left[\frac{g-14}2\right]$ in the theorem, we obtain
$$\Cl_2(C) \le \frac{g-s}{2} -2=\frac12\left[\frac{g-1}2\right]+2=\frac12\Cl(C)+2.$$
For the opposite inequality, see \cite[Proposition 3.8]{cliff}.
\end{proof}

\begin{rem}\label{rmk3.7}\begin{em}
The result also holds for $g=11$ \cite[Theorem 1.4]{fo}. For $g\le10$, we have $\Cl(C)\le4$ for all $C$ and $\Cl_2(C)=\Cl(C)$ by \cite[Proposition 3.8]{cliff}.
\end{em}\end{rem}

Finally, we can express Corollary \ref{cor11} in terms of $\Cl(C)$ rather than $g$. Although this is technically a corollary of Theorem \ref{thm3.1}, it is of sufficient interest for us to state it as a theorem.

\begin{theorem}\label{thm2}
Let $\gamma$ be an integer, $\gamma\ge5$. Then there exists a curve $C$ with $\Cl(C)=\gamma$ such that
$$\Cl_2(C)=\frac{\gamma}2+2.$$
Moreover $C$ can be taken to have genus either $2\gamma+1$ or $2\gamma+2$.
\end{theorem}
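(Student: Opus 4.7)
The plan is to derive Theorem \ref{thm2} essentially as a reindexing of Corollary \ref{cor11}, together with a single boundary case handled by an external reference. The key observation is that for a curve of maximal Clifford index the equation $\gamma = \left[\frac{g-1}{2}\right]$ has exactly two integer solutions in $g$, namely $g = 2\gamma+1$ and $g = 2\gamma+2$. So the task reduces to exhibiting, for each $\gamma \ge 5$, a curve of each of these two genera realising the required value of $\Cl_2$.

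First I would dispose of the generic range. For $g = 2\gamma+2$ with $\gamma \ge 5$, and for $g = 2\gamma+1$ with $\gamma \ge 6$, we have $g \ge 12$, so Corollary \ref{cor11} supplies a curve of genus $g$ with $\Cl(C) = \left[\frac{g-1}{2}\right] = \gamma$. A short check confirms the numerics: in both parities
$$
\frac{1}{2}\left[\frac{g-1}{2}\right] + 2 \;=\; \frac{\gamma}{2} + 2,
$$
so $\Cl_2(C) = \gamma/2 + 2$ as required. The opposite inequality $\Cl_2(C) \ge \gamma/2 + 2$ is already built into Corollary \ref{cor11} via \cite[Proposition 3.8]{cliff}, so no new work is needed here.

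The only remaining case is the extremal boundary $\gamma = 5$, $g = 2\gamma+1 = 11$, which falls below the threshold $g \ge 12$ of Corollary \ref{cor11} and hence is not accessible by the K3-surface construction of section \ref{k3}. I would handle this case by quoting \cite[Theorem 1.4]{fo}, exactly as already flagged in Remark \ref{rmk3.7}. The main obstacle, if one can call it that, is simply recognising that this low-genus case must be dispatched by an external reference rather than by the machinery of this paper; the rest of the argument is pure bookkeeping between the parameters $g$ and $\gamma$.
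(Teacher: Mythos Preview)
Your proposal is correct and follows essentially the same approach as the paper: invoke Corollary \ref{cor11} for all cases with $g\ge12$, and dispatch the single leftover case $\gamma=5$, $g=11$ via Remark \ref{rmk3.7} (i.e.\ \cite[Theorem 1.4]{fo}). The paper's own proof is just a two-line compression of exactly this argument.
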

\begin{proof}
For $\gamma\ge6$, this is a restatement of Corollary \ref{cor11}. For $\gamma=5$, we need also Remark \ref{rmk3.7}.
\end{proof}

\section{Open Questions}\label{oq}
The following question (Mercat's conjecture for rank $2$ and general $C$ -- see \cite{m} and \cite[Proposition 2.7]{ln}) remains open.

\begin{ques}\label{q1} Is it true that $\Cl_2(C)=\Cl(C)$ for the general curve $C$ of any genus?
\end{ques}

Farkas and Ortega conjectured in \cite{fo} that the answer to this question is yes and proved this for $g\le19$ (for a proof when $g\le16$, see \cite[Theorem 1.7]{fo}). If the answer is yes, we can ask a more precise question, the answer to which is known only for $g\le10$ (or equivalently $\Cl(C)\le4$ (see \cite[Proposition 3.8]{cliff})).

\begin{ques}\label{q2} Is it true that $\Cl_2(C)=\Cl(C)$ whenever $C$ is a Petri curve?
\end{ques}

It may be noted that none of the curves constructed in this paper or in \cite{fo,fo2,cl,ln} is general (they all lie on K3 surfaces with Picard number $2$). Some of the curves are definitely not Petri (in particular those of Corollary \ref{cor11} and Theorem \ref{thm2}); however it remains possible that some are Petri.

Note also that, for any $\gamma$, there exist curves with 
$$\Cl_2(C)=\Cl(C)=\gamma$$ 
(for example, smooth plane curves of degree $\gamma+4$ -- see \cite[Proposition 8.1]{cliff}).

\begin{ques}\label{q3}
Suppose $\frac{\gamma}2+2<\gamma'<\gamma$. Does there exist a curve $C$ with $\Cl(C)=\gamma$ and $\Cl_2(C)=\gamma'$?
\end{ques}


\begin{thebibliography}{CAV}
\bibitem{cp} C. Ciliberto, G. Pareschi:
\emph{Pencils of minimal degree on curves on a K3-surface}.
J. reine angew. Math. 460 (1995), 15-36.
\bibitem{dm} R. Donagi, D. Morrison:
\emph{Linear systems on K3-sections}.
J. Diff. Geom. 29 (1989), 49-64.
\bibitem{f} G. Farkas:
\emph{Brill-Noether loci and the gonality stratification of $\cM_g$}.
J. reine angew. Math. 539 (2001), 185-200. 
\bibitem{fo} G. Farkas, A. Ortega:
\emph{The minimal resolution conjecture and rank two Brill-Noether theory}.
arXiv:1010.4060. To appear in Pure and Appl. Math. Quarterly 7, no. 4 (2011).
\bibitem{fo2} G. Farkas and A. Ortega:
\emph{Higher rank Brill-Noether theory on sections of K3 surfaces}.
arXiv:1102.0276.
\bibitem{k} A. Knutsen:
\emph{Smooth curves on projective K3-surfaces}.
Math. Scandinavia 90 (2002), 215-231.
\bibitem{cliff} H. Lange and P. E. Newstead: 
\emph{Clifford Indices for Vector Bundles on Curves}.
in: A. Schmitt (Ed.) Affine Flag Manifolds and Principal Bundles. Trends in Mathematics, 165-202. Birkh\"auser (2010).
\bibitem{cl} H. Lange and P. E. Newstead: 
\emph{Further examples of stable bundles of rank 2 with 4 sections}.
arXiv:1011.0849. To appear in Pure and Appl. Math. Quarterly 7, no. 4 (2011).
\bibitem{ln} H. Lange and P. E. Newstead: 
\emph{Vector bundles of rank 2 computing Clifford indices}.
arXiv 1012.1469.
\bibitem{m} V. Mercat:
\emph{Clifford's theorem and higher rank vector bundles}.
Int. J. Math. 13 (2002), 785-796.
\bibitem{sd} B. Saint-Donat:
\emph{Projective models of K3-surfaces}.
Amer. J. Math. 96 (1974), 602-639.
\end{thebibliography}
\end{document}